\documentclass[12pt]{amsart}
 \usepackage[dvips]{epsfig}
 \usepackage{comment}
 \usepackage{enumerate}
 \usepackage{amsgen, amstext,amsbsy,amsopn, amsthm, amsfonts,amssymb,amscd,amsmat
 h,euscript,enumerate,url,verbatim,calc,xypic, mathtools}

\oddsidemargin -0.2in
\evensidemargin -0.2in
\textwidth6.2in
\textheight 22cm

 \usepackage{latexsym}
 \usepackage{graphics}
 \usepackage{color}

\newcommand{\proset}{\,\mathrel{\lower 4pt\hbox{$\scriptscriptstyle/$}
\mkern -14mu\subseteq }\,} 
 
 \newtheorem{theorem}{Theorem}[section]
  
 \newtheorem{lemma}[theorem]{Lemma}
 \newtheorem{proposition}[theorem]{Proposition}

\newtheorem{remark}[theorem]{Remark}
 
 \newtheorem{definition}[theorem]{Definition}
 
 \newtheorem{example}[theorem]{Example}

\numberwithin{equation}{section}

\usepackage{amsmath}
 \makeatother 
 
\begin{document}
 \date{\today}
 
 \title{ Lambda Module structure on higher $K$-groups}

 \author{Sourayan Banerjee and Vivek Sadhu}

 \address{Department of Mathematics, Indian Institute of Science Education and Research Bhopal, Bhopal Bypass Road, Bhauri, Bhopal-462066, Madhya Pradesh, India}
 \email{sourayan16@iiserb.ac.in, vsadhu@iiserb.ac.in}
 \keywords{ $K$-groups, Lambda rings, Lambda modules}
 
\subjclass{14C35, 19D35, 19E08}

\begin{abstract}
 In this article, we show that for a quasicompact scheme $X$ and $n>0,$ the $n$-th $K$-group $K_{n}(X)$ is a $\lambda$-module over a $\lambda$-ring $K_{0}(X)$ in the sense of Hesselholt.
\end{abstract}

 \maketitle

 \section{introduction}
 In \cite{Hes}, L. Hesselholt introduced the notion of module over $\lambda$-rings, i.e., $\lambda$-module. Let us first recall the definition (see Definition 2.5 and Remark 2.6 of  \cite{Hes}).
 \begin{definition}
  Let $(R, \lambda_{R})$ be a $\lambda$-ring. A $(R, \lambda_{R})$-module $(M, \lambda_{M})$ is a $R$-module $M$ and a sequence of additive maps 
$$ \lambda_{M, n}: M\to M ~~ (n\geq 1)$$ such that the following axioms hold:
\begin{enumerate}
 \item $ \lambda_{M, 1}= {\rm id}_{M}$;
 \item $ \lambda_{M, n} \lambda_{M, m}= \lambda_{M, nm}$ for all $m, n\geq 1$;
 \item $ \lambda_{M, n}(ax)=\psi^{n}(a) \lambda_{M, n}(x)$ for all $a\in R$ and $x\in M.$ Here $\psi^n$ is the $n$-th Adams operation associated to $(R, \lambda_{R}).$
\end{enumerate}
 \end{definition}
If we set $M=R$ and $ \lambda_{M, n}=\psi^{n},$ where $\psi^n$ are the Adams operations associated to $(R, \lambda_{R}),$ then $(R, \psi^{n})$ is a $(R, \lambda_{R})$-module. For a quasicompact scheme $X,$ $K_{0}(X)$ is a $\lambda$-ring with $\lambda$-operations defined by the usual exterior power on vector bundles. These exterior power operations have been extended to higher $K$-groups by several authors using homotopy theory (see \cite{Hil}, \cite{Krat}, \cite{Lev} and \cite{Soule}). Recently, a purely algebraic construction of the exterior power operations on higher $K$-groups of any quasicompact scheme is given in \cite{HKT} using Grayson's description of higher $K$-groups in terms of binary complexes. In this article, we use the exterior power operations  constructed in \cite{HKT} to give a $\lambda$-module structure on $K_{n}(X)$ over  $K_{0}(X)$ for $n>0$ and any quasicompact scheme $X.$ Here is our precise result:
\begin{theorem}\label{main result}
 For any quasicompact scheme $X$ and $n>0,$ each $K_{n}(X)$ is a $\lambda$-module over $K_{0}(X).$
\end{theorem}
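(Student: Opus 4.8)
The plan is to take for the module structure on $K_{n}(X)$ the usual $K_{0}(X)$-action coming from the product in $K$-theory, $K_{0}(X)\otimes K_{n}(X)\to K_{n}(X)$, and to take for the operations $\lambda_{M,k}$ the Adams operations $\psi^{k}$ on $K_{n}(X)$. These are the natural candidates: exactly as in the example $(R,\psi^{n})$ given after the definition, Hesselholt's three axioms are precisely the defining properties of Adams operations together with their compatibility with the action. To build and study these operations I would start from the exterior power operations $\lambda^{k}\colon K_{n}(X)\to K_{n}(X)$ of \cite{HKT} and pass to the square-zero extension $A:=K_{0}(X)\oplus K_{n}(X)$, in which one declares $K_{n}(X)\cdot K_{n}(X)=0$ while retaining the genuine action of $K_{0}(X)$ on the ideal $K_{n}(X)$. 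The merit of this device is that on a square-zero ideal every cross term in the relevant universal polynomials vanishes, so the structure simplifies drastically.

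First I would verify that the operations $\lambda^{k}$ of \cite{HKT}, together with those already present on $K_{0}(X)$, make $A$ a special $\lambda$-ring; this is the substantive step, discussed below. Granting it, the Adams operations $\psi^{k}$ of $A$ are ring homomorphisms with $\psi^{1}=\mathrm{id}$ and $\psi^{k}\psi^{l}=\psi^{kl}$, and they preserve the augmentation ideal $K_{n}(X)$ (apply the $\lambda$-ring map $A\to K_{0}(X)$). Restricting to that ideal and setting $\lambda_{M,k}:=\psi^{k}|_{K_{n}(X)}$, all of Hesselholt's axioms then drop out: additivity and axiom (1) are additivity of Adams operations and $\psi^{1}=\mathrm{id}$; axiom (2) is the composition law $\psi^{k}\psi^{l}=\psi^{kl}$; and axiom (3), $\psi^{k}(ax)=\psi^{k}(a)\psi^{k}(x)$ for $a\in K_{0}(X)$ and $x\in K_{n}(X)$, is the statement that $\psi^{k}$ is a ring homomorphism on $A$, where $\psi^{k}(a)$ is computed in the $\lambda$-subring $K_{0}(X)$ and so coincides with the Adams operation in the axiom. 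As a by-product, a square-zero ideal kills every term of Newton's identity save the last, so the operation is given explicitly by $\psi^{k}=(-1)^{k-1}k\,\lambda^{k}$ on $K_{n}(X)$; this also exhibits its additivity, since the Cartan formula forces $\lambda^{k}$ itself to be additive on $K_{n}(X)$ once the $K_{n}(X)\cdot K_{n}(X)$ cross terms are set to zero.

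The hard part will be the substantive step, namely showing that $A$ is a special $\lambda$-ring. This requires extracting from \cite{HKT} the compatibility of the exterior power operations with sums and, above all, with the product $K_{0}(X)\otimes K_{n}(X)\to K_{n}(X)$ (a Cartan-type product formula), as well as the composition identity expressing $\lambda^{k}\bigl(\lambda^{l}(x)\bigr)$ through the universal polynomials. The delicate point is that in the full graded ring $K_{*}(X)$ products of positive-degree classes need not vanish, so one cannot simply assert a $\lambda$-ring structure on $K_{*}(X)$ and restrict; instead I would check that each $\lambda$-ring identity coming from \cite{HKT} survives reduction modulo $K_{n}(X)\cdot K_{n}(X)$, where only the terms linear in the $K_{n}(X)$-variable remain. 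I expect axiom (3) --- the product formula tying the Adams operation on the module to the Adams operation on $K_{0}(X)$ --- to be the crux, as it is the genuine bridge between the $\lambda$-ring and the module and the one place where the construction of \cite{HKT} is really put to work.
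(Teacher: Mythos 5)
Your proposal is correct in substance, but it takes a different final step from the paper, and it is worth comparing the two. The substantive input you identify as "the hard part" --- a special $\lambda$-ring structure on a square-zero extension of $K_{0}(X)$ by $K_{n}(X)$ --- is exactly what the paper imports from \cite{HKT} in Lemma \ref{K(X) is Lambda}: the ring $K_{*}(X)$ is there \emph{defined} with the product of any two positive-degree classes equal to zero, so your worry that such products "need not vanish" is moot and your auxiliary ring $A$ is just the visible part of $K_{*}(X)$; no extra reduction argument is needed. Where you genuinely diverge is in the choice of operations. You take $\lambda_{M,k}:=\psi^{k}|_{K_{n}(X)}=(-1)^{k-1}k\,\lambda^{k}$, for which Hesselholt's axioms are immediate from the formal properties of Adams operations (composition law, ring homomorphism property), at the cost of a module structure divisible by $k$ in degree $k$. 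The paper instead takes $\lambda_{M,k}:=(-1)^{k-1}\lambda^{k}$, which is finer, but then the composition axiom is not free: one must prove $\lambda^{r}\lambda^{s}=c\,\lambda^{rs}$ with $c=-1$ for $r,s$ both even and $c=1$ otherwise (Proposition \ref{psi linearity}(2)). This is done by observing that on the square-zero ideal the universal polynomial $P_{r,s}$ collapses to $c\,s_{rs}$, and then pinning down $c$ by comparing $\psi^{rs}=\psi^{r}\psi^{s}$ with the degenerate Newton formula $\psi^{k}=(-1)^{k+1}k\lambda^{k}$ --- precisely the identity you note as a by-product. The linearity axiom is handled the same way in both arguments (your Cartan-type product formula is the paper's Lemma \ref{basic lemma linearity} applied to $P_{r}$). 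So your route is shorter and avoids the $P_{r,s}$ analysis entirely, while the paper's route buys a non-divisible module structure and, as a side benefit, determines the coefficient of $s_{nm}$ in $P_{n,m}$ (Remark \ref{coef of snm}). Both legitimately prove the theorem as stated.
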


 {\bf Acknowledgement:} The authors would like to thank Lars Hesselholt for his valuable comments and suggestions. The authors would also like to thank the referee for valuable comments and suggestions.
\section{Preliminaries}

\subsection{Symmetric functions} For a ring $R,$ let $R[x_1, x_2, \dots, x_n]$ denote the polynomial ring over $R$ in $n$ independent variables $x_1, x_2, \dots, x_n.$ A polynomial $f\in R[x_1, x_2, \dots, x_n]$ is said to be symmeteric function if $\pi f=f(x_{\pi(1)}, x_{\pi(2)}, \dots, x_{\pi(n)})$ for every permutation $\pi$ on $\{1, 2, \dots, n\}.$  For $1\leq k \leq n,$ the $k$-th elementary symmetric function in the variables $x_1, x_2, \dots, x_n$ is $s_k= \sum_{1\leq i_{1} <i_{2}< \dots < i_{k}\leq n} x_{i_{1}}x_{i_{2}}\dots x_{i_{k}}.$ A fundamental theorem of symmetric function says that every symmetric function $f\in R[x_1, x_2, \dots, x_n]$ can be written uniquely as a polynomial with coefficients in $R$ in the elementary symmetric functions.

\begin{example}\label{imp exm}
 The $r$-th power sum $x_{1}^{r} + x_{2}^{r} + \dots + x_{n}^{r}$ is a symmetric function, where $r>0.$ Thus, there exists a unique polynomial $Q_{r}$ in $n$ variables with integer coefficients such that 
 \begin{equation}\label{r th power}
  x_{1}^{r} + x_{2}^{r} + \dots + x_{n}^{r}=Q_{r}(s_1, s_2, \dots, s_n).
 \end{equation}

\end{example}

Let $t_{1}, t_{2}, \dots, t_{n}$ be the elementary symmetric functions for another set of variables $y_1, y_2, \dots, y_n.$ We say that a polynomial $f \in R[x_1, x_2, \dots, x_n; y_1, y_2, \dots, y_n]$ is a symmetric function if $$f(x_1, x_2, \dots, x_n; y_1, y_2, \dots, y_n)=f(x_{\pi(1)}, x_{\pi(2)}, \dots, x_{\pi(n)}; y_{\sigma(1)}, y_{\sigma(2)}, \dots, y_{\sigma(n)})$$ for every pair of permutations $\pi$ and $\sigma$ on $\{1, 2, \dots, n\}.$ Every symmetric function $f\in R[x_1, x_2, \dots, x_n; y_1, y_2, \dots, y_n]$ can be written uniquely as a polynomial with coefficients in $R$ in the elementary symmetric functions $s_1, s_2, \dots, s_n$ and $t_1, t_2, \dots, t_n.$

\subsection{The polynomials $P_{n, m}$ and $P_{n}$}\label{universal} Consider the symmetric function
$$ g(t)= \prod_{1\leq i_1< i_2< \dots < i_{m}\leq nm} (1 + x_{i_1}\dots x_{i_{m}}t)$$ of $nm$ variables. The coefficient of $t^n$ in $g(t)$ is a symmetric function and it can be expressed as a polynomial with integer coefficients in the elementary symmetric functions $s_1, s_2, \dots, s_{nm}.$ In fact, there is a universal polynomial $P_{n, m}$ with integer coefficient such that the coefficient of $t^n$ in $g(t)$ is $P_{n, m}(s_1, s_2, \dots, s_{nm}).$ Similarly, there is a universal polynomial $P_{n}$ with integer coefficient such that the coefficient of $t^n$ in $h(t)= \prod_{i, j=1}^{n} (1 + x_iy_jt)$ is $P_n(s_1, s_2, \dots, s_n; t_1, t_2, \dots, t_n).$ We can view the polynomial $P_{n}$ as the coefficient of $t^n$ in
\begin{equation}\label{obs abt Pn}
 \tilde{h}(t)= \prod_{i=1}^{n}(1+ x_{i}t_{1}t+x_{i}^{2}t_{2}t^2+\dots + x_{i}^{n}t_{n}t^{n}).
\end{equation}

 As an illustration, for $n=3,$  $P_3(s_1, s_2, s_3; t_1, t_2, t_3)$ is the coefficient of $t^3$ in
$$(1+x_1y_1t)(1+x_1y_2t)(1+x_1y_3t)
(1+x_2y_1t)(1+x_2y_2t)(1+x_2y_3t)
(1+x_3y_1t)(1+x_3y_2t)(1+x_3y_3t)$$
$$= (1+x_1t_1t+x_1^{2}t_2t^2+x_1^{3}t_3t^3)(1+x_2t_1t+x_2^{2}t_2t^2+x_2^{3}t_3t^3)(1+x_3t_1t+x_3^{2}t_2t^2+x_3^{3}t_3t^3).$$
Both these polynomials $P_{n, m}$ and $P_{n}$ will appear in the definition of $\lambda$-ring.

The reader can skip following lemma for a moment as it will be used only in Proposition \ref{psi linearity}.
\begin{lemma}\label{basic lemma linearity}
 Suppose that $t_{i}^{m}=0$ and  $t_{i}t_{j}=0$ for $m>1$ and $i\neq j,$ $1\leq i, j\leq n$ in (\ref{obs abt Pn}). Then $P_n(s_1, s_2, \dots, s_n; t_1, t_2, \dots, t_n)= Q_{n}(s_1, s_2, \dots, s_n)t_n,$ where the polynomial $Q_{n}$ as in (\ref{r th power}).
\end{lemma}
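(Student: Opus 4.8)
The plan is to compute $P_n$ directly from its description as the coefficient of $t^n$ in the product $\tilde h(t)$ of (\ref{obs abt Pn}), now reading $t_1, t_2, \dots, t_n$ as independent variables rather than as the elementary symmetric functions of the $y_j$. First I would record that the two hypotheses $t_i^m = 0$ for $m > 1$ and $t_i t_j = 0$ for $i \neq j$ together say precisely that every monomial of degree at least two in $t_1, t_2, \dots, t_n$ vanishes; equivalently, all computations may be carried out modulo the ideal $I$ generated by the products $t_i t_j$, $1 \le i, j \le n$.

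Next I would expand $\tilde h(t) = \prod_{i=1}^n (1 + x_i t_1 t + x_i^2 t_2 t^2 + \dots + x_i^n t_n t^n)$ modulo $I$. Each factor has the form $1 + \sum_{k=1}^n x_i^k t_k t^k$, and any term arising in the expansion of the product that selects a nonconstant summand from two or more distinct factors contains a product $t_k t_{k'}$ of two $t$-variables and hence dies modulo $I$. Therefore $\tilde h(t) \equiv 1 + \sum_{i=1}^n \sum_{k=1}^n x_i^k t_k t^k = 1 + \sum_{k=1}^n (x_1^k + \dots + x_n^k)\, t_k t^k \pmod I$. Reading off the coefficient of $t^n$, only the $k = n$ summand contributes, so this coefficient is $(x_1^n + \dots + x_n^n)\, t_n$. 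Finally, applying (\ref{r th power}) with $r = n$ rewrites the power sum as $x_1^n + \dots + x_n^n = Q_n(s_1, s_2, \dots, s_n)$, which gives $P_n = Q_n(s_1, s_2, \dots, s_n)\, t_n$ modulo $I$, exactly as claimed.

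The only point that really requires care — and the step I would write out in detail — is the identification of $P_n$ with the coefficient of $t^n$ in $\tilde h(t)$ when the $t_j$ are treated as formal variables, together with the fact that imposing the relations on the $t_j$ commutes with extracting the coefficient of $t^n$. The first is the content of (\ref{obs abt Pn}): the coefficient of $t^n$ in $\tilde h(t)$ is symmetric in $x_1, x_2, \dots, x_n$, hence a polynomial in $s_1, s_2, \dots, s_n$ with coefficients in $\Zb[t_1, t_2, \dots, t_n]$, and that polynomial is $P_n$. The second holds because reduction modulo $I$ is $\Zb[x_1, x_2, \dots, x_n][t]$-linear and therefore commutes with picking out the $t^n$-coefficient. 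Granting these two observations, the truncation above is routine, and no symmetric-function manipulation is needed beyond the single appeal to (\ref{r th power}).
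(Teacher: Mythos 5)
Your proposal is correct and follows essentially the same route as the paper: both arguments read off the coefficient of $t^n$ in $\tilde h(t)$ from (\ref{obs abt Pn}), observe that every cross term involves a product of two $t$-variables (or a power $t_i^m$ with $m>1$) and hence vanishes under the stated relations, leaving $(x_1^n+\dots+x_n^n)t_n$, and then invoke (\ref{r th power}). You merely spell out in more detail the reduction modulo the ideal of quadratic monomials in the $t_j$ and the legitimacy of treating the $t_j$ as formal variables, which the paper leaves implicit.
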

\begin{proof}
 Note that the coefficient of $t^n$ in (\ref{obs abt Pn}) is $(x_1^{n}+x_2^{n}+ \dots + x_{n}^{n})t_{n}$ because all other terms must contain $t_{i}^{m}$ or $t_{i}^{k}t_{j}^{l},$ where $m>1$ and $k, l >0.$  Hence the result by Example \ref{r th power}. 
\end{proof}

In general, it is not easy to write down the explicit formulae for $P_{n, m}$ and $P_{n}.$ However, some computations for small $m$ and $n$ has been done in \cite{Hop}. The sum of the coefficients in $P_{n, m}$ and $P_{n}$ are also calculated in \cite{Hop}. More explicitly,
\begin{lemma}\label{ sum coef of pn}
 The sum of the coefficients in $P_n$ is zero for $n>1$ and $1$ for $n=1.$
\end{lemma}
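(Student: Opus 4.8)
The plan is to evaluate the universal polynomial at a single convenient point. The sum of the coefficients of a polynomial is obtained by setting every variable equal to $1$, so the quantity we want is $P_n(1, \dots, 1; 1, \dots, 1)$. Since $P_n$ is the universal integral polynomial characterized by the symmetric-function identity in which it is the coefficient of $t^n$ in $\prod_{i,j=1}^{n}(1 + x_i y_j t)$, I may substitute any complex values for $x_1, \dots, x_n, y_1, \dots, y_n$. I would therefore choose values for which every elementary symmetric function $s_k$ and $t_k$ equals $1$: such values exist over the complex numbers, since the conditions $s_1 = \dots = s_n = 1$ amount to $\prod_{i=1}^n(1 + x_i z) = 1 + z + \dots + z^n$, which is solved by factoring the right-hand side into linear factors (and likewise for the $y_j$). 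With this choice, and writing $[t^n]F$ for the coefficient of $t^n$ in a series $F$, the sum of the coefficients becomes $[t^n] h(t)$, where $h(t) = \prod_{i,j=1}^{n}(1 + x_i y_j t)$.

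Next I would compute $h(t)$ in closed form using the logarithm. Setting $p_r = \sum_i x_i^r$ and $q_r = \sum_j y_j^r$ and expanding each factor gives
\begin{equation*}
 \log h(t) = \sum_{r \geq 1} \frac{(-1)^{r-1}}{r}\, p_r q_r\, t^r .
\end{equation*}
By symmetry of the chosen values we have $p_r = q_r$, so only the power sums of a single set are needed. These follow from $s_1 = \dots = s_n = 1$ via the identity $\log\big(\prod_i(1 + x_i z)\big) = \sum_{r\geq 1} \tfrac{(-1)^{r-1}}{r} p_r z^r$ together with $\prod_i(1 + x_i z) = (1 - z^{n+1})/(1-z)$. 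Expanding $\log\big((1-z^{n+1})/(1-z)\big) = \log(1-z^{n+1}) - \log(1-z)$ and comparing coefficients shows that $(-1)^{r-1}p_r$ equals $1$ when $(n+1)\nmid r$ and equals $-n$ when $(n+1)\mid r$. Consequently $p_r q_r = p_r^2$ equals $1$ when $(n+1)\nmid r$ and $n^2$ when $(n+1)\mid r$.

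Substituting these values back, the sum splits into the full series (giving $\log(1+t)$) plus a correction supported on the multiples of $n+1$. Resumming the correction, and distinguishing the parity of $n+1$ in the resulting logarithmic series, I expect to obtain
\begin{equation*}
 h(t) = (1+t)\,(1 - t^{n+1})^{\,n-1} \qquad\text{or}\qquad h(t) = (1+t)\,(1 + t^{n+1})^{\,n-1}
\end{equation*}
according as $n$ is odd or even. In either case the nonconstant part of the second factor begins in degree $n+1$, so the coefficient of $t^n$ in $h(t)$ comes entirely from $(1+t)$: it is $1$ for $n = 1$ and $0$ for $n > 1$, which is exactly the claim. The main obstacle is the bookkeeping of the second paragraph, namely extracting the power sums cleanly from the generating function and then resumming the correction series with the correct sign depending on the parity of $n+1$; once the closed form for $h(t)$ is in hand, reading off the coefficient of $t^n$ is immediate.
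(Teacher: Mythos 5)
Your argument is correct, but it is worth noting that the paper does not actually prove this lemma at all: its ``proof'' is a citation to Theorem 2.2 of Hopkinson's thesis. So what you have produced is a genuine self-contained proof where the paper offers only a reference. Your specialization step is legitimate --- the defining identity $[t^n]\prod_{i,j}(1+x_iy_jt)=P_n(s_1,\dots,s_n;t_1,\dots,t_n)$ is a polynomial identity over $\Zb$, so it survives substitution of complex values, and the values you need exist because $\prod_i(1+x_iz)=1+z+\dots+z^n=(1-z^{n+1})/(1-z)$ can be factored over $\mathbb{C}$. The power-sum computation checks out: $(-1)^{r-1}p_r=1-(n+1)[(n+1)\mid r]$, hence $p_rq_r=p_r^2$ is $1$ off the multiples of $n+1$ and $n^2$ on them, giving $\log h(t)=\log(1+t)+(n^2-1)\sum_{m\geq 1}\frac{(-1)^{(n+1)m-1}}{(n+1)m}t^{(n+1)m}$, and the two closed forms you anticipate are exactly what the parity cases yield. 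One simplification you could make: you do not need the closed form for $h(t)$ at all, since the correction series is visibly supported in degrees $\geq n+1$, so $h(t)=(1+t)\cdot(1+O(t^{n+1}))$ immediately, and $[t^n]h(t)=[t^n](1+t)$ follows without any resummation or parity bookkeeping. That shortcut removes the only step you flagged as delicate.
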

\begin{proof}
 See Theorem 2.2 of \cite{Hop}.
\end{proof}

\begin{lemma}\label{sum coef in pnm}
 The sum of the coefficients in $P_{n, m}$ is $1$ if $m$ is odd or $n=1$ and $0$ for $m$ even and $n>1.$
\end{lemma}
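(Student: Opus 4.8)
The plan is to identify the sum of the coefficients of $P_{n,m}$ with the value $P_{n,m}(1,1,\dots,1)$ and then to realize this value as a genuine coefficient of a product of linear forms by specializing the variables $x_1,\dots,x_{nm}$ so that \emph{every} elementary symmetric function equals $1$. Set $N=nm+1$ and take $x_i=-\zeta^{i}$, where $\zeta$ is a primitive $N$-th root of unity; then $\prod_{i=1}^{nm}(1+x_iu)=\tfrac{1-u^{N}}{1-u}=1+u+\dots+u^{nm}$, so $s_k(x_1,\dots,x_{nm})=1$ for all $k$. Since by definition $P_{n,m}(s_1,\dots,s_{nm})$ is the coefficient of $t^n$ in $g(t)=\prod_{|S|=m}(1+x_St)$ with $x_S=\prod_{i\in S}x_i$, the desired sum of coefficients is exactly $e_n(\{x_S\}_{|S|=m})$, the $n$-th elementary symmetric function of the $\binom{nm}{m}$ products $x_S$, evaluated at this special point.

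Next I would pass to power sums. Writing $p_k=\sum_{|S|=m}x_S^{\,k}=e_m(x_1^k,\dots,x_{nm}^k)$, Newton's identities express $e_n(\{x_S\})$ through $p_1,\dots,p_n$ alone, so only these finitely many power sums are needed. Each $p_k$ is the coefficient of $u^m$ in $\prod_{i=1}^{nm}(1+x_i^k u)=\prod_{i=1}^{nm}\bigl(1+(-1)^k\zeta^{ik}u\bigr)$. Grouping the roots by $d=\gcd(k,N)$ and $M=N/d$, the product taken over \emph{all} $N$-th roots of unity collapses to $(1-(-1)^{M(k+1)}u^M)^d$, and deleting the $i=0$ factor $1+(-1)^ku$ yields the closed form
\begin{equation*}
 \prod_{i=1}^{nm}(1+x_i^k u)=\frac{\bigl(1-(-1)^{M(k+1)}u^M\bigr)^d}{1+(-1)^k u}.
\end{equation*}

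The crux — and the step I expect to be the main obstacle — is to show that for the relevant range of $k$ the root-of-unity factors in the numerator contribute nothing. This is where the constraint that Newton's identities only see $p_1,\dots,p_n$ becomes essential: for $k\le n$ we have $d=\gcd(k,N)\le k\le n$, hence $M=N/d\ge N/n=m+\tfrac1n>m$, so $M\ge m+1$. Thus the numerator is $1+O(u^{m+1})$ and does not affect the coefficient of $u^m$, which forces $p_k=[u^m](1+(-1)^ku)^{-1}=(-1)^{m(k+1)}$ for every $1\le k\le n$. (Without the restriction $k\le n$ one can have $M\le m$ and this identity fails, so isolating this point is the heart of the matter.)

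Finally I would feed these values into the exponential formula $g(t)=\exp\bigl(\sum_{k\ge1}\tfrac{(-1)^{k-1}}{k}p_kt^k\bigr)$, using that $[t^n]g$ depends only on $p_1,\dots,p_n$. When $m$ is even, $p_k=1$, so $\sum_{k\ge1}\tfrac{(-1)^{k-1}}{k}p_kt^k$ agrees with $\log(1+t)$ through degree $n$ and $[t^n]g=[t^n](1+t)$, which is $1$ for $n=1$ and $0$ for $n>1$. When $m$ is odd, $p_k=(-1)^{k+1}$, so the series agrees with $-\log(1-t)$ and $[t^n]g=[t^n](1-t)^{-1}=1$ for every $n\ge1$. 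Since $n=1$ gives $1$ in both parities, this reproduces exactly the values asserted in Lemma \ref{sum coef in pnm}.
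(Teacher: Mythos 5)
Your proof is correct, and every step checks out: the specialization $x_i=-\zeta^i$ with $N=nm+1$ does give $\prod_{i=1}^{nm}(1+x_iu)=(1-u^N)/(1-u)$ and hence $s_k=1$ for all $k$; the collapse of the full product over $\mu_N$ to $\bigl(1-(-1)^{M(k+1)}u^M\bigr)^d$ follows from $\prod_{\omega^M=1}(1-\omega z)=1-z^M$; the inequality $M=N/\gcd(k,N)\geq N/n=m+\tfrac1n>m$ for $k\leq n$ is exactly where the choice $N=nm+1$ earns its keep, and you correctly identify it as the crux; and the final $\exp$--$\log$ step legitimately reduces $[t^n]g$ to $[t^n](1+t)$ for $m$ even and $[t^n](1-t)^{-1}$ for $m$ odd, which reproduces the stated values. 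Be aware, though, that the paper does not prove this lemma at all: its ``proof'' is a one-line citation to Theorem 2.3 of \cite{Hop}, so there is no in-paper argument to compare against. What your approach buys is a short, self-contained, purely elementary derivation (roots of unity plus Newton/exponential identities) of a fact the paper treats as a black box; the cost is that it is a computation at a single clever specialization rather than an identity derived inside the formalism of universal polynomials, so it yields only the coefficient sum and not, say, individual coefficients of $P_{n,m}$. One small presentational point: you first invoke Newton's identities to argue that $e_n(\{x_S\})$ depends only on $p_1,\dots,p_n$ and then switch to the exponential formula for the same purpose; either suffices alone, and stating just the $\exp$--$\log$ identity from the start would streamline the write-up.
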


\begin{proof}
 See Theorem 2.3 in \cite{Hop}.
\end{proof}
 In the following lemma, we determine the coefficient of $s_{nm}$ in $P_{n,m}.$
\begin{lemma}
Let $c$ be the coefficient of $s_{nm}$ in $P_{n,m}.$ Then $c=-1$ if $n,m$ are both even, otherwise $1$. 
\end{lemma}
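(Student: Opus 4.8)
The plan is to first observe that the coefficient of $t^n$ in $g(t)$ is homogeneous of degree $nm$ in the variables $x_1,\dots,x_{nm}$, since each factor $x_{i_1}\cdots x_{i_m}$ has degree $m$ and we multiply $n$ of them. Consequently $P_{n,m}$ is isobaric of weight $N:=nm$ when we assign $s_k$ the weight $k$. Because $s_{nm}$ already has weight $N$, it can occur in $P_{n,m}$ only to the first power and in no product with another $s_k$; thus $P_{n,m}=c\,s_{nm}+(\text{monomials in }s_1,\dots,s_{nm-1})$, and the whole task is to pin down the single scalar $c$. I would isolate it by substituting values of the $x_i$ that annihilate $s_1,\dots,s_{nm-1}$ while keeping $s_{nm}$ alive.

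Concretely, set $\zeta=e^{2\pi i/N}$ and substitute $x_i=\beta\zeta^{i}$ for an indeterminate $\beta$. Then the $x_i$ are exactly the roots of $z^{N}-\beta^{N}$, so $\prod_i(z-x_i)=z^{N}-\beta^{N}$, giving $s_1=\dots=s_{N-1}=0$ and $s_N=(-1)^{N-1}\beta^{N}$. Hence $P_{n,m}$ specializes to $c\,(-1)^{N-1}\beta^{N}$. On the other side, writing $y_S=\prod_{i\in S}x_i=\beta^{m}\zeta^{\sigma(S)}$ with $\sigma(S)=\sum_{i\in S}i$, the coefficient of $t^n$ in $g(t)=\prod_S(1+y_St)$ equals $\beta^{N}\,e_n\bigl(\{\zeta^{\sigma(S)}\}_S\bigr)$, where $e_n$ is the $n$-th elementary symmetric function. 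Comparing the two evaluations yields $c=(-1)^{N-1}e_n\bigl(\{\zeta^{\sigma(S)}\}_S\bigr)$, reducing everything to one root-of-unity computation.

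To evaluate $e_n$, I would group the factors of $\prod_S(1+\zeta^{\sigma(S)}t)$ by the residue $\sigma(S)\bmod N$: setting $a_r=\#\{S:\sigma(S)\equiv r\bmod N\}$ gives $\prod_{r=0}^{N-1}(1+\zeta^{r}t)^{a_r}$. The decisive observation is that the cyclic shift $S\mapsto S+1$ on $m$-subsets of $\mathbb{Z}/N$ is a bijection sending $\sigma(S)$ to $\sigma(S)+m$, so $a_r=a_{r+m}$ and $a_r$ depends only on $r\bmod m$. Grouping the $N$ residues into the $m$ cosets of $\langle m\rangle$ and using $\prod_{k=0}^{n-1}(1+\omega^{k}u)=1-(-u)^{n}$ for $\omega=\zeta^{m}$ a primitive $n$-th root, the product collapses to $\prod_{s=0}^{m-1}\bigl(1-(-1)^{n}\zeta^{sn}t^{n}\bigr)^{a_s}$, which is a polynomial in $t^{n}$. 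Reading off the coefficient of $t^{n}$ then gives $e_n=(-1)^{n+1}\sum_{s=0}^{m-1}a_s\,\eta^{s}$ with $\eta=\zeta^{n}=e^{2\pi i/m}$.

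Finally, since each coset carries the common value $a_s$, one has $\sum_s a_s\eta^{s}=\tfrac1n\sum_S\eta^{\sigma(S)}$, and the Gaussian-binomial identity $\sum_S z^{\sigma(S)}=z^{\binom{m+1}{2}}\binom{N}{m}_z$ evaluated at the primitive $m$-th root $\eta$, together with the $q$-Lucas theorem $\binom{nm}{m}_{\eta}=n$, gives $\sum_S\eta^{\sigma(S)}=n\,\eta^{\binom{m+1}{2}}$. As $\eta^{\binom{m+1}{2}}=(-1)^{m+1}$, this produces $e_n=(-1)^{n+1}(-1)^{m+1}=(-1)^{n+m}$, whence $c=(-1)^{N-1}(-1)^{n+m}=(-1)^{(n+1)(m+1)}$, which equals $-1$ exactly when $n$ and $m$ are both even and $1$ otherwise. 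The main obstacle is the third step: the subset sums $\sigma(S)$ are not equidistributed modulo $N$, so the raw product over residues is intractable; the cyclic-shift symmetry $\sigma(S+1)=\sigma(S)+m$ is what forces $a_r$ to be constant on cosets of $\langle m\rangle$, collapses the product into a polynomial in $t^{n}$, and leaves only the single $q$-binomial evaluation to complete the argument.
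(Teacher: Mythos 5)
Your argument is correct, but it takes a genuinely different route from the paper. The paper determines $c$ indirectly: it works in the $\lambda$-ring $K_{*}(X)$, where the product of any two elements of positive degree vanishes, so that $P_{r,s}$ applied to such an element collapses to $c\,\lambda^{rs}$ (only the monomial $s_{rs}$ survives --- the same isobarity observation you open with); Newton's formula then gives $\psi^{k}=(-1)^{k+1}k\lambda^{k}$ on these square-zero elements, and the relation $\psi^{rs}=\psi^{r}\psi^{s}$ forces $(-1)^{rs+1}=(-1)^{r+1}(-1)^{s+1}c$, i.e.\ $c=(-1)^{(r+1)(s+1)}$. That route is short but leans on the whole apparatus of Adams operations and on cancelling the factor $rs$ (the paper invokes the characteristic-zero property of $K_{*}(X)$). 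Your computation is instead a direct, self-contained evaluation of the universal polynomial: the root-of-unity specialization $x_{i}=\beta\zeta^{i}$ killing $s_{1},\dots,s_{nm-1}$, the cyclic-shift symmetry $\sigma(S+1)=\sigma(S)+m$ that makes $a_{r}$ constant on cosets of $\langle m\rangle$ and collapses the product to a polynomial in $t^{n}$, and the evaluation $\binom{nm}{m}_{\eta}=n$ via $q$-Lucas are all correct, and they combine to give $c=(-1)^{nm-1}(-1)^{n+m}=(-1)^{(n+1)(m+1)}$, in agreement with the paper. What your approach buys is independence from $\lambda$-ring theory --- the lemma becomes a purely combinatorial fact about $P_{n,m}$ that could be used as an input to Proposition 4.1(2) rather than extracted from its proof; what it costs is heavier symmetric-function machinery (Gaussian binomials and the $q$-Lucas theorem) in place of the paper's one-line appeal to $\psi^{rs}=\psi^{r}\psi^{s}$.
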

\begin{proof}
 See Remark \ref{coef of snm}. More specifically, see proof of Proposition \ref{psi linearity}(2) in section \ref{module structure}.
\end{proof}

\subsection{$\lambda$-rings}

 A $\lambda$-ring is a commutative unital ring $R$ together with sequence of maps $$\lambda^n: R \to R ~~ (n\geq 0),$$ called $\lambda$-operations, such that for all $x, y \in R,$ the following condition satisfy:
 \begin{enumerate}
  \item $\lambda^{0}(x)=1,$
  \item $\lambda^{1}(x)=x,$
  \item $\lambda^{n}(1)=0$ for $n\geq 2,$
  \item $\lambda^{n}(x+y)=\sum_{i+j=n} \lambda^{i}(x)\lambda^{j}(y),$
  \item $\lambda^{n}(xy)=P_{n}(\lambda^{1}(x), \dots, \lambda^{n}(x);\lambda^{1}(y), \dots, \lambda^{n}(y)),$
  \item $\lambda^{n}(\lambda^{m}(x))= P_{n, m}(\lambda^{1}(x), \dots, \lambda^{nm}(x)),$ where $P_{n}$ and $P_{n, m}$ are the universal polynomials with integer coefficients defined in subsection \ref{universal}.
 \end{enumerate}

We always mean a $\lambda$-ring $R$ as a pair $(R, \lambda_{R}:=\{\lambda^{n}_{R}\}).$

The following result follows from definition.

\begin{lemma}\label{basic observation}
 If $R$ is a $\lambda$-ring then 
 \begin{enumerate}
  \item $\lambda^n(0)=0$ for $n\geq 1;$
  \item $\lambda^{n}(-1)=(-1)^{n}$ for $n\geq 0.$
 \end{enumerate}

\end{lemma}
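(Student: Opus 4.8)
The plan is to package the $\lambda$-operations into a single generating function and exploit the fact that axiom (4) turns this generating function into a homomorphism from the additive group of $R$ to the multiplicative group of power series with constant term $1$. Concretely, for $x \in R$ set $\lambda_t(x) = \sum_{n \geq 0} \lambda^n(x) t^n \in R[[t]]$, so that $\lambda_t(x)$ has constant term $\lambda^0(x) = 1$ by axiom (1). Axiom (4) is precisely the statement that $\lambda_t(x+y) = \lambda_t(x)\,\lambda_t(y)$, and this one identity drives both parts.

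For part (1), I would apply this multiplicativity to the decomposition $0 = 0 + 0$, obtaining $\lambda_t(0) = \lambda_t(0)^2$ in $R[[t]]$. Since $\lambda_t(0)$ has constant term $1$, it is a unit in $R[[t]]$, so cancelling gives $\lambda_t(0) = 1$. Comparing coefficients of $t^n$ yields $\lambda^n(0) = 0$ for every $n \geq 1$, as claimed. If one prefers to avoid power series, the same conclusion follows by induction on $n$: in $\lambda^n(0) = \sum_{i+j=n}\lambda^i(0)\lambda^j(0)$ the middle terms vanish by the inductive hypothesis, leaving the two terms with $i=0$ or $j=0$ and hence $\lambda^n(0) = 2\lambda^n(0)$.

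For part (2), I first record that axioms (1)--(3) give $\lambda_t(1) = 1 + t$, since $\lambda^0(1) = 1$, $\lambda^1(1) = 1$, and $\lambda^n(1) = 0$ for $n \geq 2$. Now apply multiplicativity to $0 = 1 + (-1)$ and invoke part (1): $1 = \lambda_t(0) = \lambda_t(1)\,\lambda_t(-1) = (1+t)\,\lambda_t(-1)$. Hence $\lambda_t(-1) = (1+t)^{-1} = \sum_{n \geq 0} (-1)^n t^n$, and reading off the coefficient of $t^n$ gives $\lambda^n(-1) = (-1)^n$ for all $n \geq 0$.

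There is no serious obstacle here; the only points needing a moment's care are the invertibility used in part (1) (a power series over a commutative ring is a unit exactly when its constant term is a unit) and the observation that part (2) genuinely rests on part (1) through the identity $\lambda_t(0) = 1$. The entire argument is a formal manipulation of the defining axioms and uses nothing about the universal polynomials $P_n$ and $P_{n,m}$.
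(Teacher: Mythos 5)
Your proof is correct: the generating-function identity $\lambda_t(x+y)=\lambda_t(x)\lambda_t(y)$ from axiom (4), together with the invertibility of power series with constant term $1$, cleanly yields $\lambda_t(0)=1$ and $\lambda_t(-1)=(1+t)^{-1}$. The paper offers no proof beyond the remark that the lemma ``follows from definition,'' and your argument (including the power-series-free induction for part (1)) is precisely the standard formal manipulation of the axioms that this remark intends.
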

\subsection{Adams operations}\label{pro adams}
Given any $\lambda$-ring $R,$ one can associate sequence of functions
$$\psi^n: R \to R ~~ (n\geq 1),$$ called Adams operations, satisfying the following properties:
\begin{enumerate}
 \item each $\psi^n$ is a $\lambda$-ring homomorphism, i.e., a ring homomorphism such that $\psi^{n}\lambda^{k} = \lambda^{k}\psi^{n}$, \text{for}\; $k\geq 0.$
 \item $\psi^1={\rm id}$;
 \item for $m,n\geq 1,$ we have $\psi^n\psi^m=\psi^{mn}=\psi^m\psi^n$;
 \item for every prime number $p$ and $a\in R,$ $\psi^p(a)=a^{p}(mod~ pR).$
\end{enumerate}

 The next lemma says that Adams operations can be expressed in terms of the $\lambda$-operations. Moreover, it illustrates the uniqueness of the associated Adams operations $\psi^{n}$. 
 
 \begin{lemma}\label{adams exp as lambda}
  For a $\lambda$-ring $R,$ we have 
  $$ \psi^{n}(x)= Q_{n}(\lambda^{1}(x), \lambda^{2}(x), \dots, \lambda^{n}(x))$$ for every $x\in R$ and $n\geq 1,$ where the polynomial $Q_{n}$ was defined in (\ref{r th power}).
 \end{lemma}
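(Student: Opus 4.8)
The plan is to regard the asserted identity $\psi^{n}(x)=Q_{n}(\lambda^{1}(x),\dots,\lambda^{n}(x))$ as a universal polynomial identity in the $\lambda$-operations and to verify it by the splitting principle. Recall that $Q_{n}$ is, by its very definition in \eqref{r th power}, the polynomial that rewrites the $n$-th power sum $p_{n}=x_{1}^{n}+\dots+x_{n}^{n}$ in terms of the elementary symmetric functions $s_{1},\dots,s_{n}$. Thus the content of the lemma is exactly the classical Newton relation, transported through the $\lambda$-ring dictionary $\lambda^{k}(x)\leftrightarrow s_{k}$ and $\psi^{n}(x)\leftrightarrow p_{n}$.

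First I would reduce to the case of a \emph{split} element. By the splitting principle for $\lambda$-rings there is an injective homomorphism of $\lambda$-rings $R\hookrightarrow R'$ in which the given $x$ becomes a sum $x=\ell_{1}+\dots+\ell_{m}$ of line elements, i.e. elements with generating function $\lambda_{t}(\ell_{i})=1+\ell_{i}t$. Since $\psi^{n}$ and the $\lambda^{k}$ are natural with respect to $\lambda$-ring homomorphisms and $R\hookrightarrow R'$ is injective, it suffices to check the identity after passing to $R'$. For a split $x$ the additivity axiom (4) for the $\lambda$-operations gives $\lambda_{t}(x)=\prod_{i}(1+\ell_{i}t)$, so that $\lambda^{k}(x)=s_{k}(\ell_{1},\dots,\ell_{m})$ is the $k$-th elementary symmetric function of the $\ell_{i}$. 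On the other hand $\psi^{n}$ is a ring homomorphism, hence additive, and on a line element one has $\psi^{n}(\ell)=\ell^{n}$; therefore $\psi^{n}(x)=\sum_{i}\ell_{i}^{n}=p_{n}(\ell_{1},\dots,\ell_{m})$. Combining these with the defining property \eqref{r th power} of $Q_{n}$ (which is independent of the number of variables, the surplus $s_{k}$ simply vanishing) yields $\psi^{n}(x)=p_{n}=Q_{n}(s_{1},\dots,s_{n})=Q_{n}(\lambda^{1}(x),\dots,\lambda^{n}(x))$, as desired.

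The main obstacle is the second computation, namely pinning down $\psi^{n}$ on line elements and thereby showing that the Newton polynomial is \emph{forced}. Properties (1)--(4) of subsection \ref{pro adams} only determine $\psi^{p}$ modulo $p$, so one cannot argue from them in isolation; instead I would appeal to the standard construction of the Adams operations through the generating function $\sum_{n\geq 1}\psi^{n}(x)\,t^{n}=-t\frac{d}{dt}\log\lambda_{-t}(x)$, from which $\psi^{n}(\ell)=\ell^{n}$ for a line element is immediate and which legitimizes the splitting-principle reduction. An equivalent, purely formal route that avoids the extension $R'$ is to expand this same generating-function definition and read off Newton's identities directly: they express $\psi^{n}$ recursively in terms of $\lambda^{1},\dots,\lambda^{n}$, and the unique solution of that recursion is precisely $Q_{n}(\lambda^{1}(x),\dots,\lambda^{n}(x))$, which simultaneously delivers the formula and the claimed uniqueness of the associated Adams operations.
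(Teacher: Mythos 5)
The paper offers no argument here at all: its ``proof'' is the citation ``See Theorem~3.9 of \cite{yau}.'' Your proposal is essentially the standard proof that the cited reference carries out, and your observation that properties (1)--(4) of subsection~\ref{pro adams} alone do not pin down $\psi^{n}$ --- so that one must fall back on the actual construction of the Adams operations via the generating function $\sum_{n\geq 1}\psi^{n}(x)t^{n}=-t\frac{d}{dt}\log\lambda_{-t}(x)$ --- is correct and worth making explicit, since the paper's axiomatic presentation of $\psi^{n}$ obscures exactly this point.

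One step in your first paragraph is not right as literally stated: there is no embedding of $R$ into a $\lambda$-ring $R'$ in which an \emph{arbitrary} element $x$ becomes a finite sum of line elements. If $x=\ell_{1}+\dots+\ell_{m}$ with each $\ell_{i}$ a line element, then $\lambda_{t}(x)=\prod_{i}(1+\ell_{i}t)$ is a polynomial of degree at most $m$ in $t$, whereas for instance $\lambda^{k}(-1)=(-1)^{k}\neq 0$ for all $k$ by Lemma~\ref{basic observation}; so $x=-1$ never splits. The splitting principle applies only to finite-dimensional (positive) elements. The clean repair is the route you yourself sketch at the end: both sides of the asserted identity are integral polynomials in $\lambda^{1}(x),\dots,\lambda^{n}(x)$, so by universality it suffices to verify the identity for the generator $e=x_{1}+\dots+x_{m}$ ($m\geq n$) of the $\lambda$-ring of symmetric functions, where $e$ genuinely is a sum of line elements, $\lambda^{k}(e)=s_{k}$, $\psi^{n}(e)=x_{1}^{n}+\dots+x_{m}^{n}$, and the identity reduces to the defining equation (\ref{r th power}) of $Q_{n}$. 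Alternatively, expanding the generating-function definition of $\psi^{n}$ yields the recursion of Lemma~\ref{Newton}, which the power sums satisfy in the $s_{k}$ verbatim, and uniqueness of the expression of a symmetric function in the elementary symmetric functions forces the resulting polynomial to be $Q_{n}$. Either route closes the gap; the splitting-principle paragraph should not stand as written.
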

\begin{proof}
 See Theorem 3.9 of \cite{yau}.
\end{proof}

We record here the Newton formula for later use.

\begin{lemma}\label{Newton}
 For any $\lambda$-ring $R,$ the relation 
 $$ \psi^{k}(x)- \lambda^{1}(x)\psi^{k-1}(x) + \dots + (-1)^{k-1}\lambda^{k-1}(x)\psi^{1}(x)= (-1)^{k+1}k\lambda^{k}(x)$$ holds for $x\in R$ and $k\geq 1.$
\end{lemma}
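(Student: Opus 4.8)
The plan is to recognize this statement as the $\lambda$-ring avatar of the classical Newton identity relating power sums to elementary symmetric functions, and to deduce it from a single universal polynomial identity. The bridge is Lemma~\ref{adams exp as lambda}: since $\psi^{n}(x)=Q_{n}(\lambda^{1}(x),\dots,\lambda^{n}(x))$ with $Q_{n}$ the polynomial of \eqref{r th power}, both sides of the asserted relation are values of fixed integer polynomials in $\lambda^{1}(x),\dots,\lambda^{k}(x)$. Hence it suffices to prove the corresponding identity in the polynomial ring $\Zb[s_{1},\dots,s_{k}]$, namely
\begin{equation*}
 Q_{k}-s_{1}Q_{k-1}+\dots+(-1)^{k-1}s_{k-1}Q_{1}=(-1)^{k+1}k\,s_{k},
\end{equation*}
where $Q_{j}=Q_{j}(s_{1},\dots,s_{j})$. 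Applying the ring homomorphism $\Zb[s_{1},\dots,s_{k}]\to R$ that sends $s_{j}\mapsto\lambda^{j}(x)$ then yields the desired formula, because this homomorphism carries $Q_{j}$ to $Q_{j}(\lambda^{1}(x),\dots,\lambda^{j}(x))=\psi^{j}(x)$ by Lemma~\ref{adams exp as lambda}.

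To prove the universal identity I would use the algebraic independence of the elementary symmetric functions: for $n\ge k$ the specialization $s_{j}\mapsto e_{j}(z_{1},\dots,z_{n})$ embeds $\Zb[s_{1},\dots,s_{k}]$ into $\Zb[z_{1},\dots,z_{n}]$, under which $Q_{j}$ becomes the power sum $p_{j}=z_{1}^{j}+\dots+z_{n}^{j}$ by the defining property \eqref{r th power}. It therefore suffices to check the identity there, i.e. to prove the classical Newton identity
\begin{equation*}
 p_{k}-e_{1}p_{k-1}+\dots+(-1)^{k-1}e_{k-1}p_{1}=(-1)^{k+1}k\,e_{k}.
\end{equation*}
This I would obtain by a generating function computation: writing $E(t)=\prod_{i}(1+z_{i}t)=\sum_{j\ge 0}e_{j}t^{j}$, the logarithmic derivative gives $E'(t)=E(t)\sum_{m\ge 1}(-1)^{m-1}p_{m}t^{m-1}$, and comparing the coefficients of $t^{k-1}$ on the two sides produces $k\,e_{k}=\sum_{m=1}^{k}(-1)^{m-1}e_{k-m}p_{m}$, which is the stated relation after collecting signs.

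The only genuinely delicate point is the bookkeeping of signs, together with the observation that the statement is entirely formal once Lemma~\ref{adams exp as lambda} is in hand; the reduction to $\Zb[s_{1},\dots,s_{k}]$ removes any need to invoke a splitting principle or to manipulate the $\lambda$-ring axioms directly. I expect the main obstacle to be purely notational: matching the alternating sum $\sum_{j=0}^{k-1}(-1)^{j}e_{j}p_{k-j}$ coming from the statement with the sum $\sum_{m=1}^{k}(-1)^{m-1}e_{k-m}p_{m}$ produced by the generating function, which differ by the substitution $m=k-j$ and an overall factor $(-1)^{k-1}=(-1)^{k+1}$.
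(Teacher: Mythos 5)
Your argument is correct. The paper itself offers no proof here --- it simply cites Theorem 3.10 of \cite{yau} --- so your write-up supplies an actual argument where the paper has only a reference. The reduction is sound: by Lemma~\ref{adams exp as lambda} each $\psi^{j}(x)$ is $Q_{j}(\lambda^{1}(x),\dots,\lambda^{j}(x))$, so the asserted relation is the image of a single identity in $\Zb[s_{1},\dots,s_{k}]$ under the evaluation homomorphism $s_{j}\mapsto\lambda^{j}(x)$, and that identity is checked after the injective specialization $s_{j}\mapsto e_{j}(z_{1},\dots,z_{n})$ (injective for $n\geq k$ by algebraic independence of the elementary symmetric polynomials), where it becomes the classical Newton identity; your logarithmic-derivative computation and the sign bookkeeping $(-1)^{k-m}=(-1)^{k-1}(-1)^{m-1}$ both check out. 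This is essentially the standard proof one would find behind the citation, so there is no substantive divergence in method --- the only dependence worth noting is that your proof leans on Lemma~\ref{adams exp as lambda}, which the paper likewise only cites from \cite{yau}, so the argument is self-contained only modulo that earlier lemma.
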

\begin{proof}
 See Theorem 3.10 of \cite{yau}.
\end{proof}

\section{$\lambda$-ring structure on $K_{*}(X)$}
Let $X$ be a quasicompact scheme. The tensor product induces the multiplication in the Grothendieck ring $K_{0}(X).$ The ring $K_{0}(X)$ is a $\lambda$-ring with $\lambda$-operations $$\lambda^{r}: K_{0}(X) \to K_{0}(X), ~~ (r\geq 0),$$ where the operations $\lambda^{r}$ is defined by the usual exterior power operations of vector bundles over $X.$ In \cite{HKT}, Harris, Kock and Taelman extend the exterior power operations to higher $K$-groups $K_{n}(X)$ for $n\geq 1$ using Grayson binary complex technique. We  will quickly recall the construction \cite{HKT} of the exterior power operation on higher K-groups (see below). To do this, we need the following Grayson's description of $K$-groups.

\subsection*{Grayson's $K$-groups}
 We cite \cite{Gray} and Section 1 of \cite{HKT} for a more comprehensive discussion. Let us fix some notations.
 Given an exact category $\mathcal{N},$
\begin{itemize}
 \item  $C\mathcal{N}$: The category of chain complexes in $\mathcal{N}$ that are concentrated in nonnegative degrees. In other words,  objects of $C\mathcal{N}$ are all $\mathbb{Z}_{\geq 0}$-graded objects of $\mathcal{N}$.
 
 \item $C_{b}\mathcal{N}$: The exact subcategory of $C\mathcal{N}$ of bounded chain complexes in $\mathcal{N}.$
 
 \item $C^{q}\mathcal{N}$: The full subcategories of acyclic chain complexes in $C\mathcal{N}.$
 
 \item $C_{b}^{q}\mathcal{N}$: The category of bounded acyclic chain complexes in $C\mathcal{N}.$ 
\end{itemize}

The categories $C\mathcal{N}$, $C_{b}\mathcal{N}$ and $C_{b}^{q}\mathcal{N}$ are all exact. Thus, we can iterate their construction to define $\mathbb{Z}_{\geq 0}^n$-graded objects in $\mathcal{N}$, call them $n$-dimensional multicomplexes. The category of $n$-dimensional multicomplexes in $\mathcal{N}$ is denoted by $C^n\mathcal{N}.$ Similarly, the exact categories $(C_{b})^{n} \mathcal{N},(C^{q})^{n}\mathcal{N}$ and $(C_{b}^{q})^{n}\mathcal{N}$ denote the categories of $n$-dimensional bounded, acyclic and bounded acyclic multicomplexes respectively.

A binary complex over an exact category $\mathcal{N}$ is a triple $(N_{\bullet},d,d')$, where $N_{\bullet}$ is a $\mathbb{Z}_{\geq 0}$-graded object of $\mathcal{N}$ together with two differentials $d$ and $d'$ such that $(N_{\bullet},d)$ and $(N_{\bullet},d')$ are in $C\mathcal{N}$. As before, we fix the following notations.

\begin{itemize}
 \item $B\mathcal{N}$: The category of binary chain complexes in $\mathcal{N}.$
 \item $B_{b}\mathcal{N}$: The category of bounded binary chain complexes in $\mathcal{N}.$
 
 \item $B^{q}\mathcal{N}$: The category of acyclic binary chain complexes in $\mathcal{N}.$
 
 \item $B_{b}^{q}\mathcal{N}$: The category of bounded  acyclic binary chain complexes in $\mathcal{N}.$
\end{itemize}

Note that each of these categories of binary complexes is exact. Analogously, the exact categories $(B_{b})^{n} \mathcal{N},(B^{q})^{n}\mathcal{N}$ and $(B_{b}^{q})^{n}\mathcal{N}$ denote the categories of $n$-dimensional bounded, acyclic and bounded acyclic binary  multicomplexes respectively. An $n$-dimensional binary multicomplex is a collection of $\mathbb{Z}_{\geq 0}^n$- graded objects in $\mathcal{N}$ with differentials $(d^i,\Tilde{d^i}) $ in each direction $1\leq i\leq n$ and 
the differentials satisfy the following commutativity laws whenever $i \neq j$:  
\begin{align*}
d^id^j=d^jd^i\\
\Tilde{d^i}d^j = {d^j}\Tilde{d^i}\\
d^i\Tilde{d^j} = \Tilde{d^j}d^i\\
\Tilde{d^i}\Tilde{d^j} = \Tilde{d^j}\Tilde{d^i}.
\end{align*}
We say that an $n$-dimensional binary multicomplex {\it diagonal} if the pair of differentials in some direction are equal, i.e., $d^{i}=\Tilde{d^{i}}$ for some $1\leq i\leq n.$
Since the category $(B_{b}^{q})^{n}\mathcal{N}$ is exact, we can define $K_{0}((B_{b}^{q})^{n}\mathcal{N}).$  Let $\mathcal{D}$ be a subgroup of $K_{0}((B_{b}^{q})^{n}\mathcal{N})$ generated by the classes of the diagonal bounded acyclic binary multicomplexes. A result of Grayson says that for $n\geq 1,$ $K_{n}(\mathcal{N})$ is isomorphic to $K_{0}((B_{b}^{q})^{n}\mathcal{N})/\mathcal{D}$,  which we shall use in the remainder of the article as our definition of the $K$-groups. More precisely, we have the following definition (see Corollary 7.4 of \cite{Gray} and Definition 1.3 of \cite{HKT}): 

\begin{definition}\label{HKT def}
   Let $\mathcal{N}$ be an exact category. For $n\geq 0,$ $K_{n}\mathcal{N}$ is the abelian group having generators $[N],$ one for each object $N$ of $(B_{b}^{q})^{n}\mathcal{N}$ and the relations are:
	\begin{enumerate}
		\item $[N^{'}] + [N^{''}]=[N]$ for every short exact sequence $0 \to N^{'} \to N \to N^{''}\to 0$ in $(B_{b}^{q})^{n}\mathcal{N};$
		\item $[D]=0$ if $D$ is a diagonal bounded acyclic binary multicomplex.
	\end{enumerate}
\end{definition}

The above definition differs somewhat from the Grayson's original definition. However, the Proposition 1.4 of \cite{HKT} demonstrates that working with Definition  \ref{HKT def} is not harmful. 

\subsection*{Exterior power operations on higher $K$-groups}
Let $\mathcal{P}(X)$ denote the category of vector bundles on $X.$ In \cite{HKT}, Harris, Kock and Taelman inductively construct functors
$$\varLambda_{n}^{r}: (B_{b}^{q})^{n}\mathcal{P}(X) \to (B_{b}^{q})^{n}\mathcal{P}(X) ~~ for~ all~~ r>0 ~~ and ~~ n \geq0$$ from the usual exterior power endofunctors on $\mathcal{P}(X).$ 
The idea of the construction is as follows: Start with the usual exterior power endofunctors $\varLambda^{r}$ on $\mathcal{P}(X).$ Let $\mathcal{P}(X)^{{\Delta}^{op}}$ denote the category of simplicial objects in  $\mathcal{P}(X).$ The endofunctors $\varLambda_{1}^{r}$  on the category of bounded acyclic complexes $C_{b}^{q}\mathcal{P}(X)$ are defined as $$\varLambda_{1}^{r}:= N\varLambda^{r}\Gamma: C_{b}^{q}\mathcal{P}(X) \to C_{b}^{q}\mathcal{P}(X) ~~ for~ all~~ r>0,$$ where $\Gamma: C\mathcal{P}(X) \rightarrow \mathcal{P}(X)^{{\Delta}^{op}}$,  $N: \mathcal{P}(X)^{\Delta^{op}} \rightarrow C\mathcal{P}(X)$ are given by Dold-Kan correspondence (see section 2 of \cite{HKT}). Note $N$ is inverse to $\Gamma$ upto natural isomorphism. Recursively, we can define (see Corollary 3.5 of \cite{HKT}) $$\varLambda_{n}^{r}:= N\varLambda_{n-1}^{r}\Gamma: (C_{b}^{q})^{n}\mathcal{P}(X) \to (C_{b}^{q})^{n}\mathcal{P}(X) ~~ for~ all~~ r, n >0.$$ Given an object $(P_{\bullet}, d, d^{'})$ in $B_{b}^{q}\mathcal{P}(X),$ $\varLambda_{n}^{r}(P_{\bullet})$ is independent of the differentials $d$ and $d^{'}.$ So, we can apply  $\varLambda_{n}^{r}$ individually on $(P_{\bullet}, d)\in C_{b}^{q}\mathcal{P}(X)$ and $(P_{\bullet}, d^{'})\in C_{b}^{q}\mathcal{P}(X)$ to get an object in $B_{b}^{q}\mathcal{P}(X)$ (see Lemma 4.2 of \cite{HKT}). As a result, the endofunctors   $\varLambda_{n}^{r}$  on $(C_{b}^{q})^{n}\mathcal{P}(X)$ can be lifted to endofunctors on binary multicomplexes $$\varLambda_{n}^{r}: (B_{b}^{q})^{n}\mathcal{P}(X) \to (B_{b}^{q})^{n}\mathcal{P}(X) ~~ for~ all~~ r>0 ~~ and ~~ n \geq0.$$

\begin{lemma}\label{lambda operat}
 The functors $\varLambda_{n}^{r}$ induce well-defined homomorphisms 
 \begin{equation}\label{harris ope}
  \lambda^{r}: K_{n}(X) \to K_{n}(X) ~~ {\it for} ~~ r, n>0.
 \end{equation}
\end{lemma}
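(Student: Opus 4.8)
The plan is to define $\lambda^{r}$ on generators by $[N] \mapsto [\varLambda_{n}^{r}(N)]$ and then to check that this assignment kills the two families of relations in Definition \ref{HKT def}, so that it descends to a well-defined homomorphism on $K_{n}(X)$. Concretely, write $K_{n}(X) = F/\mathcal{R}$, where $F$ is the free abelian group on the isomorphism classes of objects of $(B_{b}^{q})^{n}\mathcal{P}(X)$ and $\mathcal{R}$ is the subgroup generated by the elements $[N]-[N']-[N'']$ coming from short exact sequences together with the elements $[D]$ coming from diagonal bounded acyclic binary multicomplexes. Since $\varLambda_{n}^{r}$ is a functor it preserves isomorphisms, so $[N] \mapsto [\varLambda_{n}^{r}(N)]$ defines a homomorphism $\widetilde{\lambda^{r}}\colon F \to K_{n}(X)$; the whole point is to show $\widetilde{\lambda^{r}}(\mathcal{R}) = 0$, for then the induced map $\lambda^{r}$ on $F/\mathcal{R} = K_{n}(X)$ is automatically a homomorphism, $\mathcal{R}$ being the kernel of the projection.

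I would first dispose of the diagonal relations. If $D = (P_{\bullet}, d, d')$ is diagonal, say $d^{i}=\widetilde{d^{i}}$ in direction $i$, then, because $\varLambda_{n}^{r}$ is built recursively via the Dold--Kan functors $N$ and $\Gamma$ one direction at a time and is applied to each of the two differentials separately (as recalled before the statement), the two output differentials of $\varLambda_{n}^{r}(D)$ in direction $i$ coincide. Hence $\varLambda_{n}^{r}(D)$ is again diagonal, and $[\varLambda_{n}^{r}(D)] = 0$ in $K_{n}(X)$ by relation (2).

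The substantial point is the short exact sequence relation: for $0 \to N' \to N \to N'' \to 0$ in $(B_{b}^{q})^{n}\mathcal{P}(X)$ I must show $[\varLambda_{n}^{r}(N)] = [\varLambda_{n}^{r}(N')] + [\varLambda_{n}^{r}(N'')]$. The strategy is the classical one. The exterior power of an extension of vector bundles carries a natural finite filtration whose successive subquotients are $\varLambda^{i}(N') \otimes \varLambda^{r-i}(N'')$; since the Dold--Kan functors are exact and the construction $\varLambda_{n}^{r} = N\varLambda_{n-1}^{r}\Gamma$ is functorial, this filtration propagates through the recursion, yielding a filtration of $\varLambda_{n}^{r}(N)$ with subquotients $\varLambda_{n}^{i}(N') \otimes \varLambda_{n}^{r-i}(N'')$ in $(B_{b}^{q})^{n}\mathcal{P}(X)$, where $\otimes$ is the degreewise tensor product of $n$-dimensional complexes (which stays bounded acyclic since the factors are bounded acyclic complexes of vector bundles). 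Applying relation (1) repeatedly then gives
\begin{equation*}
 [\varLambda_{n}^{r}(N)] = \sum_{i=0}^{r} [\varLambda_{n}^{i}(N') \otimes \varLambda_{n}^{r-i}(N'')]
\end{equation*}
in $K_{n}(X)$. The two extreme terms $i=0$ and $i=r$, where $\varLambda^{0}$ is the tensor unit, reduce to $[\varLambda_{n}^{r}(N'')]$ and $[\varLambda_{n}^{r}(N')]$, so additivity comes down to showing that every mixed term with $0 < i < r$ vanishes in $K_{n}(X)$.

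I expect this vanishing of the mixed terms to be the main obstacle, and it is exactly where $n \geq 1$ is used: for $K_{0}$ the mixed terms are genuinely nonzero (which is why $\lambda^{r}$ is not additive on $K_{0}(X)$), whereas on the higher groups the internal product of two classes coming from $(B_{b}^{q})^{n}\mathcal{P}(X)$ must die. I would establish this either by invoking the additivity of the exterior power operations from \cite{HKT}, or directly by arguing that each mixed product $\varLambda_{n}^{i}(N') \otimes \varLambda_{n}^{r-i}(N'')$ acquires, through the Dold--Kan construction, a diagonal binary structure in one of the $n$ directions and is therefore annihilated by relation (2). Once the mixed terms vanish, $\widetilde{\lambda^{r}}$ kills all of $\mathcal{R}$ and descends to the required homomorphism $\lambda^{r}\colon K_{n}(X) \to K_{n}(X)$.
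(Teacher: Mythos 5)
The paper gives no argument of its own here---it simply cites Theorem 6.2 of \cite{HKT}---and your outline does reconstruct the skeleton of the proof given there: present $K_{n}(X)$ by the generators and relations of Definition \ref{HKT def}, observe that $\varLambda_{n}^{r}$ carries diagonal multicomplexes to diagonal multicomplexes (this part is correct, since the functor is applied to the two differentials in each direction separately), and derive additivity from the classical filtration of the exterior power of an extension, whose subquotients are the products $\varLambda_{n}^{i}(N')\otimes\varLambda_{n}^{r-i}(N'')$. The identification of the cross terms as the crux, and of $n\geq 1$ as the reason they die, is also the right diagnosis.

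Nevertheless there is a genuine gap exactly where you flag the ``main obstacle'': the vanishing of $[\varLambda_{n}^{i}(N')\otimes\varLambda_{n}^{r-i}(N'')]$ for $0<i<r$ is not a detail to be supplied later but is the entire technical content of the theorem, and neither of the two routes you offer is a proof. Invoking ``the additivity of the exterior power operations from \cite{HKT}'' is circular, since additivity of $N\mapsto[\varLambda_{n}^{r}(N)]$ on short exact sequences is precisely the well-definedness being established. The second route is closer to what actually happens, but the tensor product of two bounded acyclic binary multicomplexes is not literally diagonal in any direction, so the claim that each mixed term ``acquires a diagonal binary structure'' itself requires a substantial argument: one must first say which tensor product appears in the subquotients (it is the Dold--Kan/simplicial one, not the naive degreewise tensor product of multicomplexes that you describe), and then prove that the class of such a product of acyclic objects lies in the subgroup $\mathcal{D}$ generated by diagonals when $n\geq 1$. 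That lemma, proved in \cite{HKT} by a devissage of bounded acyclic binary complexes, is the heart of their Theorem 6.2; without it your argument establishes only that $[\varLambda_{n}^{r}(N)]-[\varLambda_{n}^{r}(N')]-[\varLambda_{n}^{r}(N'')]$ equals a sum of cross terms, not that it is zero.
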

\begin{proof}
 See Theorem 6.2 of \cite{HKT}.
\end{proof}
\begin{remark}\label{lambda^1}{\rm Since $\varLambda^{1}=id$ and  $N$ is inverse to $\Gamma,$
  $\varLambda_{1}^{1}=N\varLambda^{1} \Gamma$ is identity. We can observe from iteration that $\varLambda_{n}^{1}=N\varLambda_{n-1}^{1} \Gamma$ is identity because $\varLambda_{n-1}^{1}=id$ and $N$, $\Gamma$ are compositions of the functors that are inverses of each other in every given direction. For instance, $\it\Lambda^1_2:= N_hN_v\it\Lambda^1$ $\Gamma_v\Gamma_h = id$ (see Remark 3.6 of \cite{HKT}). Here the indices $h$ and $v$ represent horizontal and vertical directions, respectively. Hence, $\lambda^{1}=id.$}
\end{remark}

The graded abelian group $K_{*}(X):= \bigoplus_{n\geq 0}K_{n}(X)$ is a commutative ring with a multiplication 
$$(a_0, a_1, a_2, \dots, )\bullet (b_0, b_1, b_2, \dots )= (a_0b_0, a_0b_1 +a_1b_0, a_0b_2+a_2b_0, \dots).$$ Note that the product of any two elements in $\bigoplus_{n\geq 1}K_{n}(X)$ is zero. Each $K_{n}(X)$ is a $K_{0}(X)$-module via $[P].[Q_{\bullet}]:=[P\otimes Q_{\bullet}],$ where $P$ in $\mathcal{P}(X)$ and $Q_{\bullet}$ in $(B_{b}^{q})^{n}\mathcal{P}(X).$ Furthermore, the exterior power operations $\lambda^{r}: K_{*}(X) \to K_{*}(X), r\geq 0$ defined by the formula
\begin{equation}\label{oper on K_*}
 \lambda^{r}((a_0, a_1, a_2, \dots))=(\lambda^{r}(a_0), \sum_{i=0}^{r-1}\lambda^{i}(a_0)\lambda^{r-i}(a_1), \sum_{i=0}^{r-1}\lambda^{i}(a_0)\lambda^{r-i}(a_2), \dots).
\end{equation}
\begin{lemma}\label{K(X) is Lambda}
 The operations $\lambda^{r}$ defined in (\ref{oper on K_*}) satisfy the axioms of $\lambda$-ring. In otherwords, $K_{*}(X)$ is a $\lambda$-ring.
\end{lemma}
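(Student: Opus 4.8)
The plan is to verify each of the six axioms of a $\lambda$-ring for the operations $\lambda^r$ on $K_*(X)$ defined by (\ref{oper on K_*}), exploiting systematically the fact that $K_*(X)$ decomposes as $K_0(X) \oplus K_{>0}(X)$ where $K_{>0}(X) := \bigoplus_{n\geq 1} K_n(X)$ is a square-zero ideal. Writing a general element as $a = a_0 + a_+$ with $a_0 \in K_0(X)$ and $a_+ \in K_{>0}(X)$, the defining formula says exactly that the degree-zero component of $\lambda^r(a)$ is the known $K_0$-operation $\lambda^r(a_0)$, while the positive-degree components are $\sum_{i=0}^{r-1} \lambda^i(a_0)\,\lambda^{r-i}(a_+)$, i.e.\ $\lambda^r$ is built from the already-established $\lambda$-ring structure on $K_0(X)$ (Lemma \ref{lambda operat} supplies the operations on each $K_n(X)$, and the usual exterior powers handle $K_0(X)$). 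Axioms (1)--(3) are immediate from the formula: for (1), $\lambda^0(a) = (\lambda^0(a_0), 0, 0, \dots) = 1$; for (2), $\lambda^1 = \mathrm{id}$ follows from Remark \ref{lambda^1} together with $\lambda^1 = \mathrm{id}$ on $K_0(X)$; and for (3), $\lambda^n(1) = \lambda^n(1,0,0,\dots) = (\lambda^n_{K_0}(1), 0, \dots) = 0$ for $n \geq 2$.

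The genuinely computational axioms are (4) the additivity/sum formula, (5) the product formula involving $P_n$, and (6) the composition formula involving $P_{n,m}$. For (4), I would write $a = a_0 + a_+$, $b = b_0 + b_+$ and compare $\lambda^n(a+b)$ with $\sum_{i+j=n}\lambda^i(a)\lambda^j(b)$ componentwise. In degree zero this is precisely the sum formula already known in $K_0(X)$. In positive degree, because $a_+ b_+ = 0$ the cross terms simplify drastically, and the identity should reduce to the degree-zero sum formula applied to $a_0, b_0$ combined with additivity of each $\lambda^r$ on $K_n(X)$ from Lemma \ref{lambda operat}; the bookkeeping is routine once the square-zero structure is used. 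For (5) and (6), the key technical point is that whenever a polynomial $P$ with integer coefficients is evaluated on arguments all lying in the square-zero ideal $K_{>0}(X)$, only its \emph{linear} part survives, and the constant term contributes via the sum-of-coefficients data recorded in Lemmas \ref{ sum coef of pn} and \ref{sum coef in pnm}.

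The main obstacle will be axiom (6), the Plethysm-type identity $\lambda^n(\lambda^m(a)) = P_{n,m}(\lambda^1(a), \dots, \lambda^{nm}(a))$, where both sides must be expanded and matched in positive degree. Here one must feed the \emph{whole} collection $\lambda^1(a), \dots, \lambda^{nm}(a)$ into the universal polynomial $P_{n,m}$; each such argument has degree-zero part $\lambda^k(a_0)$ and a positive-degree part, and since products of two positive-degree parts vanish, $P_{n,m}$ evaluated on these arguments linearizes in the positive-degree variables around the point $(\lambda^1(a_0),\dots,\lambda^{nm}(a_0))$. Matching the resulting linear-in-$a_+$ expression against the left-hand side will require the explicit coefficient of $s_{nm}$ in $P_{n,m}$ (this is exactly why the coefficient lemma, with value $-1$ when $n,m$ are both even and $1$ otherwise, was isolated), and the corresponding Adams-operation identity $\psi^n\lambda^m(a_0)$ packaging via Lemma \ref{adams exp as lambda}.

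I expect the cleanest route to be to first prove the companion statement that the Adams operations $\psi^n$ on $K_*(X)$ restrict correctly and satisfy $\psi^n(a_+) = \psi^n$ on each summand, so that axioms (5) and (6) can be checked after linearization by comparing Taylor-linear parts rather than by brute-force polynomial expansion; the sum-of-coefficients lemmas then control the constant terms and the coefficient-of-$s_{nm}$ lemma controls the surviving linear term. In short: \textbf{reduce everything to the known $\lambda$-ring $K_0(X)$ plus a first-order (square-zero) perturbation}, and the only non-formal inputs are the three combinatorial facts about $P_n$ and $P_{n,m}$ already in hand.
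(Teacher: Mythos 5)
The paper's own ``proof'' of this lemma is a citation: it invokes Theorems 7.1 and 8.18 of \cite{HKT} and reproduces no argument. Your proposal instead tries to verify the axioms directly, and the formal part of the plan is fine: axioms (1)--(3) are immediate from the defining formula (\ref{oper on K_*}), and axiom (4) does follow by the componentwise computation you describe, using the sum formula in $K_{0}(X)$ together with the additivity of $\lambda^{r}$ on each $K_{n}(X)$, $n\geq 1$, which Lemma \ref{lambda operat} supplies as an input.

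For axioms (5) and (6), however, there is a genuine gap: the square-zero linearization you propose is a \emph{reduction}, not a proof. Take $a=(a_0,0,\dots)$ and $x=(0,\dots,x_k,0,\dots)$ with $x_k\in K_{k}(X)$, $k\geq 1$. Linearizing $P_{r}$ via Lemma \ref{basic lemma linearity}, axiom (5) for this pair becomes exactly the identity $\lambda^{r}(a_0\cdot x_k)=\psi^{r}(a_0)\lambda^{r}(x_k)$ in $K_{k}(X)$, and axiom (6) applied to $x$ becomes $\lambda^{r}\lambda^{s}(x_k)=c\,\lambda^{rs}(x_k)$ with $c$ the coefficient of $s_{rs}$ in $P_{r,s}$. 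These are precisely the two statements of Proposition \ref{psi linearity}, which the paper \emph{deduces from} the present lemma, running the logic in the opposite direction. They are not formal consequences of the $\lambda$-ring structure on $K_{0}(X)$ plus the combinatorics of $P_{n}$ and $P_{n,m}$; they are substantive statements about how the functors $\varLambda_{n}^{r}$ of \cite{HKT} interact with the $K_{0}(X)$-module structure and with each other, and proving them is the actual content of Theorems 7.1 and 8.18 of \cite{HKT}. Your outline leaves that content untouched, and would become circular if the missing identities were imported from Proposition \ref{psi linearity}. (A smaller point: the sum-of-coefficients data of Lemmas \ref{ sum coef of pn} and \ref{sum coef in pnm} does not control any ``constant term'' in the linearization; the only combinatorial input actually needed after linearizing is Lemma \ref{basic lemma linearity} and the coefficient of $s_{nm}$ in $P_{n,m}$.)
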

\begin{proof}
 See Theorems 7.1 and 8.18 of \cite{HKT}.
\end{proof}

\section{ $\lambda$-module structure on higher $K$-groups}\label{module structure}
\begin{proposition}\label{psi linearity}
 Let $a\in K_{0}(X)$ and $x\in K_{n}(X)$ for $n>0.$ The group homomorphisms $\lambda^{r}: K_{n}(X) \to K_{n}(X) ~~ {\it for} ~~ r, n>0 $ defined in (\ref{harris ope}) satisfy the following:
 \begin{enumerate}
  \item $\lambda^{r}(a.x)=\psi^{r}(a)\lambda^{r}(x).$ Here $\psi^{r}$ is the $r$-th Adams operation on $K_{0}(X).$
  
  \item $\lambda^{r}\lambda^{s}=c\lambda^{rs},$ where $c$ is $-1$ for $r,s$ both even and $1$ otherwise.

 \end{enumerate}

\end{proposition}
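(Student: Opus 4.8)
The plan is to derive both identities from the single structural fact, recorded in Lemma \ref{K(X) is Lambda}, that $K_*(X)=\bigoplus_{n\ge 0}K_n(X)$ is a $\lambda$-ring, applied to the pure-degree elements $a$ and $x$, together with the defining feature that the product of any two positive-degree elements vanishes. Concretely, for $x\in K_n(X)$ with $n>0$ each $\lambda^j(x)$ with $j\ge 1$ again lies in $K_n(X)$, so $\lambda^i(x)\lambda^j(x)=0$ for all $i,j\ge 1$. Consequently, when the universal polynomials $P_r$ and $P_{r,s}$ of Subsection \ref{universal} are evaluated on the elements $\lambda^j(x)$, every monomial that is nonlinear in those variables dies, and only the linear part survives. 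This observation reduces both parts to extracting a linear coefficient of a universal polynomial.

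For part (1), view $a\in K_0(X)$ and $x\in K_n(X)$ inside $K_*(X)$; the ring multiplication restricted to $K_0(X)\times K_n(X)$ is exactly the module action, so computing $\lambda^r$ of $a\bullet x$ in $K_*(X)$ recovers $\lambda^r(a.x)\in K_n(X)$. Axiom (5) of a $\lambda$-ring gives $\lambda^r(a\bullet x)=P_r(\lambda^1(a),\dots,\lambda^r(a);\lambda^1(x),\dots,\lambda^r(x))$. Setting $t_i=\lambda^i(x)$, the vanishing just noted says precisely that $t_i t_j=0$ for $i\neq j$ and $t_i^2=0$, which is the hypothesis of Lemma \ref{basic lemma linearity}. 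That lemma collapses $P_r$ to $Q_r(\lambda^1(a),\dots,\lambda^r(a))\,\lambda^r(x)$, and by Lemma \ref{adams exp as lambda} the scalar $Q_r(\lambda^1(a),\dots,\lambda^r(a))$ is exactly $\psi^r(a)$. Hence $\lambda^r(a.x)=\psi^r(a)\lambda^r(x)$.

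For part (2), axiom (6) gives $\lambda^r(\lambda^s(x))=P_{r,s}(\lambda^1(x),\dots,\lambda^{rs}(x))$, and by the same vanishing only the part of $P_{r,s}$ that is linear in its arguments $s_1,\dots,s_{rs}$ contributes. As recalled in Subsection \ref{universal}, $P_{r,s}$ is the coefficient of $t^r$ in $\prod_{1\le i_1<\dots<i_s\le rs}(1+x_{i_1}\cdots x_{i_s}t)$, hence isobaric of weight $rs$ once $s_j$ is given weight $j$; the only linear monomial of weight $rs$ is therefore $s_{rs}$ itself. Writing $c$ for its coefficient, we obtain $\lambda^r\lambda^s(x)=c\,\lambda^{rs}(x)$, and it remains to compute $c$.

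Determining $c$ is the crux, and doing so simultaneously proves the coefficient lemma invoked above. Since $c$ is the integer coefficient of $s_{rs}$ in the universal polynomial $P_{r,s}$, I would compute it in the free $\lambda$-ring on one generator $x$, where the $\lambda^j(x)$ are algebraically independent and $c$ is read off as the image of $\lambda^r\lambda^s(x)$ in the indecomposables $Q=I/I^2$, with $I$ the augmentation ideal. Modulo $I^2$ both $\lambda^k$ and $\psi^k$ become additive, and Newton's formula (Lemma \ref{Newton}), after killing the products, yields $\lambda^k(x)\equiv \tfrac{(-1)^{k-1}}{k}\psi^k(x)$ upon tensoring with $\Qb$. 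Using the commutation $\lambda^r\psi^s=\psi^s\lambda^r$ and $\psi^s\psi^r=\psi^{rs}$ one computes
$$\lambda^r(\lambda^s(x))\equiv \frac{(-1)^{s-1}}{s}\,\psi^s(\lambda^r(x))\equiv \frac{(-1)^{r+s}}{rs}\,\psi^{rs}(x)\equiv c\,\frac{(-1)^{rs-1}}{rs}\,\psi^{rs}(x)\pmod{I^2},$$
and comparing coefficients of the nonzero element $\psi^{rs}(x)$ gives $c=(-1)^{(r-1)(s-1)}$, that is, $c=-1$ when $r,s$ are both even and $c=1$ otherwise. The main obstacle is exactly this sign: the torsion-free passage to $Q\otimes\Qb$ is what lets one pin it down cleanly, whereas a direct application of $\psi^r\psi^s=\psi^{rs}$ to $x$ in $K_n(X)$ would only determine $\lambda^r\lambda^s-c\lambda^{rs}$ up to $rs$-torsion.
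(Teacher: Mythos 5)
Your proof is correct, and part (1) is essentially the paper's own argument: embed $a$ and $x$ into the $\lambda$-ring $K_*(X)$ of Lemma \ref{K(X) is Lambda}, apply axiom (5), collapse $P_r$ via Lemma \ref{basic lemma linearity} using the vanishing of products of positive-degree classes, and identify $Q_r(\lambda^1(a),\dots,\lambda^r(a))$ with $\psi^r(a)$ by Lemma \ref{adams exp as lambda}. For part (2) you and the paper use the same ingredients --- only the linear term $c\,s_{rs}$ of $P_{r,s}$ survives, and the sign $c$ is pinned down by playing Newton's formula (Lemma \ref{Newton}) against $\psi^r\psi^s=\psi^{rs}$ --- but you execute the sign computation by a genuinely different route. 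The paper applies Newton's formula directly to $\underline{x}\in K_*(X)$, obtains $(-1)^{rs+1}rs\,\lambda^{rs}(\underline{x})=(-1)^{r+1}(-1)^{s+1}rs\,c\,\lambda^{rs}(\underline{x})$, and cancels on the grounds that $\lambda$-rings have characteristic zero; since higher $K$-groups can have torsion as abelian groups, that cancellation as literally written only determines $c$ modulo the annihilator of $rs\,\lambda^{rs}(\underline{x})$. Your detour --- noting that $c$ is a universal integer, namely the coefficient of $s_{rs}$ in $P_{r,s}$, and computing it in the free $\lambda$-ring on one generator by passing to indecomposables tensored with $\Qb$, where $\lambda^k\equiv \tfrac{(-1)^{k-1}}{k}\psi^k$ and division by $rs$ is harmless --- is exactly the repair that makes this step watertight, at the cost of invoking the structure of the free $\lambda$-ring. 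Your observation that $P_{r,s}$ is isobaric of weight $rs$, so that $s_{rs}$ is its only possible linear monomial, also makes explicit a point the paper leaves unstated.
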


\begin{proof}
 (1) Let $\underline{a}=(a, 0, \dots )\in K_{*}(X)$ and $\underline{x}=(0, 0, \dots, x, 0, \dots )\in K_{*}(X).$ Since $K_{*}(X)$ is a $\lambda$-ring (see Lemma \ref{K(X) is Lambda}), $$\lambda^{r}(\underline{a}\bullet\underline{x})= P_{r}(\lambda^{1}(\underline{a}), \dots, \lambda^{r}(\underline{a});\lambda^{1}(\underline{x}), \dots, \lambda^{r}(\underline{x})).$$ Write $\underline{ax}$ for $(0, 0, \dots, ax, 0, \dots )$, $\underline{\lambda^{i}(a)}$ for $(\lambda^{i}(a), 0, \dots)$ and $\underline{\lambda^{i}(x)}$ for $(0, 0, \dots, \lambda^{i}(x), 0, \dots).$ Using the multiplication rule on $K_{*}(X),$ we have 
 
 \begin{align*}
  (0, \dots, \lambda^{r}(ax), 0, \dots)=\lambda^{r}(\underline{ax})&= P_{r}(\underline{\lambda^{1}(a)}, \dots, \underline{\lambda^{r}(a)};\underline{\lambda^{1}(x)}, \dots, \underline{\lambda^{r}(x)})\\
  &= Q_{r}(\underline{\lambda^{1}(a)}, \dots, \underline{\lambda^{r}(a)})\underline{\lambda^{r}(x)}~~ (by~ Lemma~ \ref{basic lemma linearity})\\
  &=(Q_{r}(\lambda^{1}(a), \dots, \lambda^{r}(a)), 0, \dots)\underline{\lambda^{r}(x)}\\
  &=(0, \dots, Q_{r}(\lambda^{1}(a), \dots, \lambda^{r}(a))\lambda^{r}(x), 0, \dots).
 \end{align*} Therefore, $\lambda^{r}(ax)=Q_{r}(\lambda^{1}(a), \dots, \lambda^{r}(a))\lambda^{r}(x).$ We get the desired assertion by Lemma \ref{adams exp as lambda}.

 (2) Note that $\lambda^{r}\lambda^{s}(\underline{x})=P_{r, s}(\lambda^{1}(\underline{x}), \dots, \lambda^{rs}(\underline{x}))=c\lambda^{rs}(\underline{x})$ because all products appearing in the polynomial $P_{r,s}$ are trivial. By applying Newton formula (see Lemma \ref{Newton}) for the $\lambda$-ring $K_{*}(X)$ and Adams operations $\psi^{*},$ we get $\psi^{rs}(\underline{x})=(-1)^{rs+1}rs\lambda^{rs}(\underline{x}).$ Since each $\psi^{r}$ is a ring homomorphism, we also have $\psi^{r}\psi^{s}(\underline{x})=(-1)^{r+1}(-1)^{s+1}rs\lambda^{r}\lambda^{s}(\underline{x}).$ We know $\psi^{rs}=\psi^{r}\psi^{s}.$ Thus, $(-1)^{rs+1}=(-1)^{r+1}(-1)^{s+1}c$ because the characteristic of $K_{*}(X)$ is zero (see Proposition 1.29 of \cite{yau}). Hence the result. \end{proof}
 
 \begin{remark}\label{coef of snm}
 \rm {The proof of Proposition \ref{psi linearity}(2) basically determines the coefficient of $s_{nm}$ in the polynomial $P_{n,m}(s_1, s_2, \dots, s_{nm}).$}
 \end{remark}

 {\it Proof of Theorem \ref{main result}:} For $r, n>0,$ we define $\lambda_{K_{n}(X), r}:= (-1)^{r-1}\lambda^{r}: K_{n}(X) \to K_{n}(X),$ where $\lambda^{r}$ as in (\ref{lambda operat}). The result now follows from Remark \ref{lambda^1} and Proposition \ref{psi linearity}.\qed


\begin{thebibliography}{AAA}
 
  
  \bibitem{Gray} D. R. Grayson, {\it Algebraic $K$-theory via binary complexes}, Journal of American Mathematical Society, Vol {\bf 25}, Number {\bf 4} (2012), 1149-1167.
 
  
  \bibitem{HKT} T. Harris, B. Kock and L. Taelman, {\it Exterior power operations on higher $K$-groups via binary complexes}, Annals of $K$-theory, vol.2 ({\bf 3}), (2017) 409-449.
  
  \bibitem{Hil} H. L. Hiller, {\it $\lambda$-rings and algebraic $K$-theory}, J. Pure and Applied Algebra, {\bf 20}(3) (1981), 241-266.
  
  
  \bibitem{Hes} L. Hesselholt, {\it The big de Rham-Witt complex}, Acta. Math., {\bf 214} (2015), 135-207.
  
  \bibitem{Hop} John R. Hopkinson, {\it Universal polynomials in lambda rings and the $K$-theory of the infinite loop space tmf}, Thesis (Ph.D.)-Massachusetts Institute of Technology. 2006.
  
  \bibitem{Krat} C. Kratzer, {\it $\lambda$-structure en $K$-theorie algebrique}, Comment. Math. Helv. {\bf 55}(2) (1980), 233-254.
  
 
  
  \bibitem{Lev} M. Levine, {\it Lambda-operations, $K$-theory and motivic cohomology}, pp. 131-184 in Algebraic $K$-theory (Toronto, ON, 1996) edited by V. P. Snaith, Field Inst. Commun. 16, Amer. Math. Soc., Providence, RI, 1997.
  
  \bibitem{Soule} C. Soule, {\it Operations en $K$-theorie algebrique}, Canadian J. Math {\bf 37}(3) (1985), 488-550.
  
  
  
 
  
  
  
  \bibitem{yau} D. Yau, {\it Lambda-rings}, World Scientific, Hackensack, NJ, 2010.
  
 
 \end{thebibliography}
\end{document}